\newtheorem{theorem}{Theorem}
\newtheorem{lemma}[theorem]{Lemma}
\newtheorem{conjecture}[theorem]{Conjecture}
\newtheorem*{prop*}{Proposition}
\theoremstyle{definition}
\newtheorem{definition}[theorem]{Definition}
\newtheorem{example}[theorem]{Example}
\newcommand{\tb}{\textbf}
\newcommand{\C}{\mathbb{C}}
\newcommand{\N}{\mathbb{N}}
\newcommand{\tn}{\textnormal}
\newcommand{\se}{\subseteq}
\newcommand{\til}{\widetilde}
\newcommand{\bs}{\backslash}
\newcommand{\ol}{\overline}
\newcommand{\lam}{\lambda}
\newcommand{\m}{\ol{\til{m}}}
\newcommand{\im}{\includegraphics[width=1cm,align=c]} 
\newcommand{\iml}{\includegraphics[width=1.4cm,align=c]} 
\newcommand{\imxl}{\includegraphics[width=1.8cm,align=c]} 
\newcommand{\ims}{\includegraphics[width=0.8cm,align=c]} 
\newcommand{\imxs}{\includegraphics[width=0.6cm,align=c]} 
\newcommand{\tcb}{\textcolor{blue}}
\title{Counting induced subgraphs with the Kromatic symmetric function}
\author{Laura Pierson \\ University of Waterloo \\ \href{mailto:lcpierson73@gmail.com}{lcpierson73@gmail.com}}
\begin{document}

\maketitle

\begin{abstract}
    The \emph{chromatic symmetric function} $X_G$ is a sum of monomials corresponding to proper vertex colorings of a graph $G$. Crew, Pechenik, and Spirkl (2023) recently introduced a $K$-theoretic analogue $\ol{X}_G$ called the \emph{Kromatic symmetric function}, where each vertex is instead assigned a nonempty set of colors such that adjacent vertices have nonoverlapping color sets. $X_G$ does not distinguish all graphs, but a longstanding open question is whether it distinguishes all trees. We conjecture that $\ol{X}_G$ does distinguish all graphs. As evidence towards this conjecture, we  show that $\ol{X}_G$ determines the number of copies in $G$ of certain induced subgraphs on 4 and 5 vertices as well as the number of induced subgraphs isomorphic to each graph consisting of a star plus some number of isolated vertices.
\end{abstract}

\section{Introduction}

The \emph{\tb{\tcb{chromatic symmetric function}}} was introduced by Stanley (1995) in \cite{stanley1995symmetric} as a symmetric function generalization of the chromatic polynomial. He remarked that he did not know of any two nonisomorphic trees with the same chromatic symmetric function, and much work has since been dedicated to studying which pairs of graphs are distinguished by $X_G.$ The chromatic symmetric function is known to distinguish all trees on up to 29 vertices (\cite{smith2015symmetric, heil2019algorithm}), various infinite families of trees including caterpillars and spiders (\cite{morin2005caterpillars, martin2008distinguishing, aliste2014proper, gerling2017distinguishing, aliste2023marked}), and several other infinite families of graphs including squids (\cite{martin2008distinguishing})  and trivially perfect graphs (\cite{tsujie2018chromatic}). Various generalizations of $X_G$ have also been shown to distinguish many or all trees, including a rooted version of $X_G$ (\cite{loehr2024rooted}), a group algebra version (\cite{foley2021transplanting}), a noncommutative version (\cite{gebhard2001chromatic}), and a quasisymmetric version (\cite{aval2023quasisymmetric}). Various properties of a tree are known to be computable from $X_G,$ including the subtree polynomial (\cite{martin2008distinguishing}), the path and degree sequence (\cite{martin2008distinguishing}), the number of vertices of degree at least 3 (\cite{crew2022note}), and counts of certain subtrees (\cite{lydon2016chromatic, salcido2023counting}). For general graphs (not necessarily trees), $X_G$ determines the girth and the number of vertices, edges, connected components, matchings, and triangles (\cite{orellana2014graphs}). However, it does not distinguish all graphs: \cite{orellana2014graphs} and \cite{aliste2021vertex} construct infinite families of pairs of nonisomorphic graphs with the same $X_G$, and \cite{crewgraphlist} lists 1000 pairs of small graphs with equal chromatic symmetric functions.

In \cite{kromatic2023}, Crew, Pechenik, and Spirkl (2023) introduced the following $K$-theoretic analogue of $X_G$ (see \cite{buch2005combinatorial} for background on combinatorial $K$-theory):

\begin{definition}[Crew, Pechenik, and Spirkl (2023), \cite{kromatic2023}]
    A \emph{\tb{\tcb{proper set coloring}}} of $G$ is a function $\kappa:V(G)\to 2^\N\bs\{\varnothing\}$ such that $\kappa(v)\cap \kappa(w) = \varnothing$ whenever $vw\in E(G)$, so each vertex receives a nonempty set of colors such that adjacent vertices have nonoverlapping color sets. For a vertex-weighted graph $G$ with weight function $\omega:V(G)\to\N,$ the \emph{\tb{\tcb{Kromatic symmetric function}}} is $$\ol{X}_{(G,\omega)} := \sum_\kappa \prod_{v\in V(G)}\left(\prod_{i\in \kappa(v)} x_i\right)^{\omega(v)}.$$ If no weight function is specified, we will assume all vertices have weight 1.
\end{definition}

A related function $Y_G$ was studied by Stanley (1998) in \cite{stanley1998graph}, with the difference that it includes terms where some vertices are not assigned any colors. Gasharov (1996, \cite{gasharov1996incomparability}) also introduced a similar function $\til{X}^{\boldsymbol{m}}_G$ that tracks colorings where vertex $i$ is assigned $m_i$ colors for a fixed sequence of nonnegative integers $\boldsymbol{m} = (m_1,m_2,\dots,)$. In \cite{marberg2023kromatic}, Marberg (2023) gives a construction of $\ol{X}_G$ using Hopf algebras and introduces several quasisymmetric analogues of $\ol{X}_G$.

Since $X_G$ can be obtained by taking the lowest degree terms of $\ol{X}_G$, $\ol{X}_G$ contains more information about $G$ than $X_G,$ and in fact, we conjecture that it contains enough information to distinguish all graphs:

\begin{conjecture}\label{conj:distinguishing}
    There do not exist nonisomorphic graphs $G$ and $H$ with $\ol{X}_G = \ol{X}_H$.
\end{conjecture}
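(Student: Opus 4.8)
A natural line of attack on Conjecture~\ref{conj:distinguishing} runs as follows. Since $X_G$ is the lowest-degree homogeneous part of $\ol{X}_G$, the function $\ol{X}_G$ already determines $n:=|V(G)|$, so it suffices to prove that $\ol{X}_G$ determines, for every graph $H$, the number $\tn{ind}(H,G):=\#\{\,W\se V(G):G[W]\cong H\,\}$ of induced copies of $H$ in $G$. Indeed, taking $H=G$, if $\ol{X}_G=\ol{X}_H$ then $H$ contains an induced copy of $G$ using all $n=|V(H)|$ of its vertices, which forces $H\cong G$.

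The first step is to translate $\ol{X}_G$ into the language of independent sets. Writing a proper set coloring $\kappa$ as the tuple $(V_i)_{i\ge1}$ with $V_i=\{v:i\in\kappa(v)\}$, properness says each $V_i$ is independent and the nonempty-set condition says $\bigcup_i V_i=V(G)$, so $\ol{X}_G=\sum_\mu N_\mu(G)\,m_\mu$, where $N_\mu(G)$ is the number of ordered tuples of nonempty independent sets of $G$ with sizes the parts of $\mu$ and union $V(G)$. Dropping the covering constraint, the corresponding count is $\prod_s c_{\mu_s}(G)$, where $c_k(G)$ is the number of independent $k$-subsets of $G$; stratifying by the union and Möbius-inverting over the Boolean lattice gives $N_\mu(G)=\sum_{W\se V(G)}(-1)^{\,n-|W|}\prod_s c_{\mu_s}(G[W])$. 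As $\mu$ ranges over all partitions the products $\prod_s c_{\mu_s}$ span all polynomials in the $c_k$, and since $c_1(G[W])=|W|$ a Vandermonde argument separates the contribution of each subset size $|W|=j$. One concludes that $\ol{X}_G$ is equivalent data to the list, over $j=0,1,\dots,n$, of the \emph{multiset of independence polynomials of the $j$-vertex induced subgraphs of $G$}; Conjecture~\ref{conj:distinguishing} thereby becomes the statement that these ``independence-polynomial $j$-decks'' determine $G$ up to isomorphism.

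From this reformulation I would recover each $\tn{ind}(H,G)$ by induction on $|V(H)|$. When all $j$-vertex graphs have distinct independence polynomials the count can be read off directly; the real content lies in breaking ties (already at $j=4$ the paw and $C_4$ share an independence polynomial), and there one must bring in cross-level information, relating $\tn{ind}(H,G)$ to the counts $\tn{ind}(H',G)$ for graphs $H'$ on $j\pm1$ vertices obtained from $H$ by adding or deleting a vertex, all of which are controlled by the induction. This is essentially the mechanism behind the results announced in the abstract: the independence polynomials of a star $K_{1,k}$ together with some isolated vertices have a very rigid shape, which makes that family cleanly separable within each deck, while the specific $4$- and $5$-vertex counts come out of a finite analysis of which small independence polynomials can coincide.

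The step I expect to be the main obstacle is producing a \emph{uniform} version of this tie-breaking. Coincidences among independence polynomials proliferate as $j$ grows, and it is far from clear that the cross-level relations are ever enough to pin down every $\tn{ind}(H,G)$: a complete proof would amount to a new reconstruction-type theorem for the independence-polynomial decks, and would in particular settle whether $\ol{X}$ distinguishes all trees, which is itself open. So beyond the structured families and small cases treated here, a full resolution likely requires either a global identity going past the above inclusion--exclusion or a genuinely new idea; the theorems of this paper are offered as evidence that no obstruction arises.
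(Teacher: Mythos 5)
The statement you are working on is Conjecture~\ref{conj:distinguishing}, which the paper does not prove: it is posed as an open problem, and Theorems~\ref{thm:subgraphs_4}, \ref{thm:subgraphs_5}, and \ref{thm:stars} are offered only as partial evidence. So there is no proof in the paper to compare against, and your proposal --- as you yourself say in the last paragraph --- is a reduction plus a research program, not a proof. That has to be stated plainly: nothing you have written establishes the conjecture.

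The reduction itself, however, is sound and is a genuinely cleaner packaging of the paper's machinery. Your identification of the coefficient of $m_\mu$ with counts of tuples of stable sets covering $V(G)$ is the content of Theorem~\ref{thm:m_expansion} (stated there in the $\m$-basis), and your inclusion--exclusion over the union $W$ together with the Vandermonde separation via $c_1(G[W])=|W|$ correctly shows that $\ol{X}_G$ is equivalent to knowing, for each $j$, the multiset of independence polynomials of the $j$-vertex induced subgraphs of $G$. This globalizes what Lemma~\ref{lem:subgraphs} extracts piecemeal as the quantities $\#(i_1,\dots,i_\ell)$, and it makes transparent why the paper works in the complement $\ol{G}$ and why certain counts come out only as solutions of underdetermined linear systems. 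The genuine gap is exactly the one you name and do not close: distinct graphs on $j$ vertices can share an independence polynomial (the paw and $C_4$ already at $j=4$, with coincidences proliferating as $j$ grows), so within a single level $j$ the deck cannot separate their counts, and no argument is given that the cross-level relations always break the ties. Indeed, the residual linear systems in Theorems~\ref{thm:subgraphs_4} and~\ref{thm:subgraphs_5}, whose reduced row echelon forms still have free columns, are precisely this obstruction surfacing in the smallest cases. Until a uniform tie-breaking mechanism (or a different idea altogether) is supplied, the conjecture remains open, and your text should present the reformulation as a lemma and the rest as a proposed strategy rather than as a proof.
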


As evidence for Conjecture \ref{conj:distinguishing}, we show that $\ol{X}_G$ can be used to count certain induced subgraphs of $G$. As an application of our results, we give an alternative proof that $\ol{X}_G$ distinguishes the graphs in each of three examples from \cite{kromatic2023} of pairs of graphs with the same $X_G$ but different $\ol{X}_G$ (see Example \ref{ex:distinguishing} below).

We prove three main results towards Conjecture \ref{conj:distinguishing}. Our first result counts certain subgraphs of order 4:

\begin{theorem}\label{thm:subgraphs_4}
    The number of induced copies in $G$ of the following order 4 graphs can be computed from $\ol{X}_G$:
    $$\arraycolsep=10pt
    \begin{array}{cccccccc}
        \imxs{4v} & \im{e+2v} & \ims{P3+v} & \imxs{2e} & \ims{claw} & \ims{K4-e} & \ims{K4} \\
    \end{array}$$
    and the counts of the remaining 4 order 4 graphs satisfy a system of 3 linear equations determined by $\ol{X}_G$.
\end{theorem}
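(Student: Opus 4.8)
The strategy is to extract combinatorial data about $G$ from the coefficients of $\ol{X}_G$ in a suitable symmetric function basis, then convert that data into induced subgraph counts via inclusion–exclusion over the lattice of "edge overlaps." The key observation is that $\ol{X}_G$, like $X_G$, has a power-sum expansion whose coefficients are signed sums over spanning subgraphs: writing $\ol{X}_G = \sum_{S \se E(G)} (\text{something}) \, p_{\lambda(S)} \cdot (\text{correction terms from $K$-theory})$. More precisely, $X_G = \sum_{S \se E(G)} (-1)^{|S|} p_{\lambda(S)}$ where $\lambda(S)$ is the partition of block sizes of the connected components of $(V(G), S)$, and $\ol{X}_G$ refines this: its higher-degree terms record, for each such spanning subgraph, additional information because a proper set coloring can assign overlapping-in-a-controlled-way multisets. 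So the first step is to pin down exactly which "generalized power sums" or augmented monomial symmetric functions appear in $\ol{X}_G$ and what their coefficients count.

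**The main computation.** Once I have a clean expansion, I would isolate, for each target 4-vertex graph $H$, a specific coefficient (or linear combination of coefficients) of $\ol{X}_G$ that equals $\sum_{H' : \text{spanning subgraph on 4 vertices}} c_{H,H'} \cdot (\text{number of induced $H'$ in $G$})$, where the transition matrix $(c_{H,H'})$ is a known, graph-independent integer matrix. The point of passing to $\ol{X}_G$ rather than $X_G$ is that $K$-theory supplies extra independent linear relations — each proper set coloring contributes at every degree $\ge |V(G)|$, and the degree-$(|V(G)|+k)$ part of $\ol{X}_G$ for a fixed small induced subgraph pattern gives a new equation not visible in $X_G$. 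Concretely, for order-4 subgraphs there are 11 isomorphism types; $X_G$ alone is known to count only a few of them (via \cite{orellana2014graphs}: vertices, edges, triangles, etc., giving information on things like $\im{4v}$-type counts through the girth/triangle data), but $\ol{X}_G$ should add enough equations to solve for 7 of the 11 exactly and reduce the remaining 4 to a rank-($4-3$) affine family, i.e. a $3$-dimensional system — matching the theorem statement. I would organize this by: (i) fixing the vertex count $n = |V(G)|$, recoverable as the degree of $\ol{X}_G$; (ii) recovering edge count, triangle count, and $\ol{K}$-theoretic refinements (e.g. counts of copies of $K_3$ with a pendant, or of $2K_2$) from low-degree coefficients; (iii) assembling the linear system in the 11 unknowns, row-reducing over $\Z$ (or $\Q$), and reading off which unknowns are determined.

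**The main obstacle.** The hard part will be step (ii)–(iii): showing that the coefficients of $\ol{X}_G$ actually give the claimed *rank* — i.e., that the linear map from (induced subgraph counts on $\le 4$ vertices) to (the relevant coefficients of $\ol{X}_G$) has exactly the right kernel. This requires (a) writing down the $K$-theoretic power-sum (or "Kromatic power sum") expansion precisely enough to identify coefficients with explicit subgraph-counting statistics, and (b) a careful rank computation of an explicit integer matrix — likely done by exhibiting enough small graphs (disjoint unions of paths, stars, triangles, $K_4$, $K_4$ minus an edge) whose $\ol{X}_G$ values are linearly independent in the relevant coordinates, thereby certifying that no further linear relation holds among the "solvable" seven, while the obstruction to the remaining four is genuinely $3$-dimensional. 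A secondary subtlety: one must be sure the extracted quantities are *induced* subgraph counts and not merely "not-necessarily-induced" counts — this is handled by a standard Möbius inversion over the subgraph containment poset, converting homomorphism/embedding counts into induced counts, but the bookkeeping for which linear combinations of $\ol{X}_G$-coefficients are clean must be done with care. I expect the proof to proceed by first establishing a lemma giving the Kromatic power-sum expansion, then a lemma translating its low-degree coefficients into a short list of subgraph statistics, then the matrix computation finishing the three cases (exactly-determined, the explicit count formulas, and the residual linear system).
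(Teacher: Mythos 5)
Your overall plan --- extract graph statistics from coefficients of $\ol{X}_G$ in a suitable basis, assemble a linear system in the 11 order-4 induced-subgraph counts, and row-reduce to see which are determined --- does match the shape of the paper's argument. But the central ingredient is left as an unproven hope, and the specific route you lean toward is not the one that works. You propose to find a ``Kromatic power-sum'' expansion whose coefficients are signed sums over spanning subgraphs plus $K$-theoretic correction terms, and you yourself flag that ``pinning down'' this expansion is the hard part. No such clean signed expansion is established (or needed): the paper instead uses the already-known $\m$-basis expansion of Crew--Pechenik--Spirkl, in which the coefficient of $\m_\lam$ has a \emph{positive} combinatorial meaning --- it counts coverings of $V(G)$ by distinct stable sets with sizes the parts of $\lam$ --- and then proves a recursive lemma (Lemma \ref{lem:subgraphs}) converting these coefficients into the numbers $\#(i,j)$ of subgraphs of the complement $\ol{G}$ with $i$ vertices, $j$ edges, and no isolated vertices. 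Passing to $\ol{G}$ (stable sets of $G$ become cliques of $\ol{G}$) is what makes the bookkeeping tractable; without this step, or something equivalent, you cannot actually write down the entries of your transition matrix, and the rank claim --- that exactly 7 of the 11 counts are determined and the other 4 satisfy 3 independent relations --- cannot be certified. Note also that the rank computation is not done by exhibiting test graphs, as you suggest, but by explicitly listing the equations (five direct counts such as the number of $K_4$'s from $[\m_{41^{n-4}}]$, then five more relations among the remaining six unknowns) and row-reducing a concrete integer matrix.

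Two smaller points. First, $n=|V(G)|$ is not ``the degree of $\ol{X}_G$'' --- the Kromatic symmetric function is an inhomogeneous power series of unbounded degree; $n$ is recovered from the fact that $[\m_{1^k}]$ vanishes for $k\ne n$ and equals $1$ for $k=n$ (equivalently, from the lowest-degree part). Second, your worry about converting not-necessarily-induced counts into induced counts is legitimate but is handled in the paper not by an abstract M\"obius inversion over a containment poset, but by direct case analysis: each quantity $\#(i,j)$ or clique-plus-clique count is expanded as a sum over the (few) isomorphism types of order-4 induced subgraphs of $\ol{G}$ that could contain such a configuration, with explicitly computed multiplicities. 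Your proposal would need to carry out that same case analysis to produce usable equations.
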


In particular, Theorem \ref{thm:subgraphs_4} implies that $\ol{X}_G$ can determine whether or not $G$ contains an induced claw \ims{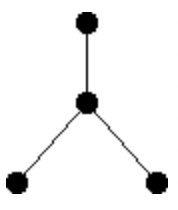}, which is of interest because $X_G$ cannot tell whether $G$ is claw-free and much is known about the structure of claw-free graphs (\cite{chudnovsky2005structure}). Claw-free graphs are of particular interest in the study of the chromatic symmetric function because another major open question about $X_G$ is the Stanley-Stembridge conjecture (\cite{stanley1995symmetric}, Conjecture 5.1), which says that $X_G$ is $e$-positive for $G$ a claw-free incomparability graph of a partially ordered set.

Our second result counts certain subgraphs of order 5:

\begin{theorem}\label{thm:subgraphs_5}
    The number of induced copies of the following order 5 graphs can also be computed from $\ol{X}_G$:
    $$\arraycolsep=7pt
    \begin{array}{ccccccccccc}
        \ims{5v} & \im{e+3v} & \im{P3+2v} & \im{2e+v} & \ims{claw+v} & \im{K14} & \ims{K4-e+P3} & \im{K3+e_5} & \im{cricket} & \iml{P5} & \im{P3+e}
    \end{array}$$
     and the counts of the remaining 23 order 5 graphs satisfy a system of 14 linear equations determined by $\ol{X}_G.$ Also, for $T$ a tree, the number of copies of all induced subgraphs of order 4 or 5 can be determined from $\ol{X}_T.$ 
\end{theorem}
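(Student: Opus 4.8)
The plan is to extend the mechanism behind Theorem~\ref{thm:subgraphs_4} by one vertex, handling the three assertions in turn. Throughout we may freely use every invariant that $\ol X_G$ already determines: since $X_G$ is the lowest-degree part of $\ol X_G$, we inherit all invariants known to be recoverable from $X_G$ (number of vertices $n$, edges, triangles, components, the induced-$P_3$ count, and more, via \cite{orellana2014graphs}), and we may also use the seven order-$4$ counts and the three order-$4$ relations from Theorem~\ref{thm:subgraphs_4}. The first step is to fix a supply of linear functionals of $\ol X_G$ whose values are visibly determined by $\ol X_G$ and admit a combinatorial reading: the natural choice is the coefficients $a_\lambda(G)$ of $\ol X_G$ in the augmented monomial basis (equivalently, suitable specializations of $\ol X_G$), where $a_\lambda(G)$ counts, up to a fixed normalization, the tuples $(S_1,\dots,S_{\ell(\lambda)})$ of nonempty independent sets of $G$ with $|S_j|=\lambda_j$ and $\bigcup_j S_j=V(G)$ — that is, the proper set colorings of $G$ whose colour-class sizes are $\lambda$.

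The crux is a \emph{localization} of these functionals. Write $\lambda$ as a multiset of ``large'' parts $\mu=(\mu_1,\dots,\mu_r)$ with each $\mu_i\ge 2$ together with $t$ parts equal to $1$. A vertex set is independent in $G$ exactly when it is independent in the induced subgraph it spans, so in a cover counted by $a_\lambda(G)$ the large sets $S_1,\dots,S_r$ touch a vertex set $A$ with $|A|\le\sum_i\mu_i$, and the $t$ singletons must cover $V(G)\setminus A$; the number of ways to choose those singletons depends only on $n$, $t$ and $|A|$, via an explicit inclusion--exclusion (surjection) count. Letting $t$ vary and using linear independence of the exponential functions $j^{\,t}$, one extracts from $\ol X_G$ the refined quantities $N_{\mu,m}(G)=\sum_{|V(H)|=m}\beta_{\mu,H}\,\tn{ind}_H(G)$, where $\tn{ind}_H(G)$ is the number of induced copies of $H$ in $G$ and $\beta_{\mu,H}$ is the number of tuples of independent sets of $H$ of sizes $\mu_1,\dots,\mu_r$ whose union is $V(H)$. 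Thus $\ol X_G$ determines, for every profile $\mu$ and every $m\le\sum_i\mu_i$, an explicit $\Z$-linear combination of induced-subgraph counts on at most $m$ vertices. Taking $m=5$ and letting $\mu$ range over profiles with $\sum_i\mu_i$ modest (and reusing Theorem~\ref{thm:subgraphs_4} to dispose of the $m\le 4$ pieces) gives a linear system in the $34$ unknowns $\tn{ind}_H(G)$, $|V(H)|=5$; adjoining the trivial relation $\sum_H\tn{ind}_H(G)=\binom n5$ and the order-$4$ relations that propagate upward completes the system.

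Parts~1 and~2 then reduce to a rank computation for this system: the assertion is that it determines $11$ of the unknowns outright and constrains the remaining $23$ by exactly $14$ independent relations. This is the main obstacle and the combinatorial heart of the theorem — it is a finite but substantial calculation over all $34$ graphs, and making the rank come out at precisely $11$ (resolved) and $14$ (relations), rather than something weaker, forces one to choose the profiles $\mu$ in the localization step generously and to keep careful track of the coefficients $\beta_{\mu,H}$ (and of how the order-$4$ data feeds in). I would organize this as an explicit matrix computation, verifying the rank and reading off which columns are in the span of the others.

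Finally, for a tree $T$ every induced subgraph is a forest, so $\tn{ind}_H(T)=0$ whenever $H$ contains a cycle. Among the four order-$4$ graphs left unresolved by Theorem~\ref{thm:subgraphs_4}, three ($K_3+K_1$, $C_4$, and the paw) are non-forests and hence vanish, so the $3\times4$ system collapses to pin down the last one, $P_4$. Among the $23$ order-$5$ graphs left unresolved here, all but two — namely $P_4+K_1$ and the $5$-vertex spider obtained from $K_{1,3}$ by subdividing one edge — are non-forests and vanish, so the $14$ relations become $14$ equations in those two unknowns; it remains to check that this reduced system has rank $2$, a small explicit verification. (One could alternatively recover the surviving tree counts from known results on which tree invariants are determined by $X_T$, since in a tree every path is an induced path.) Combining these steps proves Theorem~\ref{thm:subgraphs_5}.
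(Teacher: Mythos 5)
Your strategy is the same as the paper's: use the $\m$-expansion of $\ol{X}_G$ (Theorem~\ref{thm:m_expansion}) to extract, for each clique-size profile, the number of ways to cover a prescribed number of vertices of $\ol{G}$ by cliques of prescribed sizes (this extraction, which you do by varying the number of singleton parts and inverting, is exactly Lemma~\ref{lem:subgraphs}, proved there by a downward recursion rather than your linear-independence argument --- both work); then assemble these statistics, together with the order-$4$ data from Theorem~\ref{thm:subgraphs_4}, into a linear system in the $34$ order-$5$ induced-subgraph counts and read off its rank. The gap is that you stop exactly where the theorem's content begins: you do not exhibit a specific collection of functionals, nor compute the resulting matrix, nor verify that its reduced row echelon form resolves precisely the $11$ listed graphs and leaves $14$ independent relations among the other $23$. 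You acknowledge this ("a finite but substantial calculation"), but without it the statement is not proved --- the theorem is an assertion about the outcome of that computation, not about the existence of some linear system. For reference, the paper first isolates $6$ counts directly (those of $K_5$, $K_5$ minus an edge, and four other dense graphs, via $[\m_{51^{n-5}}]$, $\#(5,9)$, $\#(5,0,1,1)$, etc.), then builds a $19\times 28$ matrix whose rows count order-$4$ induced subgraphs inside order-$5$ ones, connected $5$-vertex subgraphs with $4$ through $7$ edges, and covers by a triangle plus edges or by two triangles; its RREF has exactly $5$ unit rows, giving $6+5=11$ determined counts and $19-5=14$ residual relations.

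Two smaller points. First, your tree argument identifies the correct two surviving forests ($P_4+K_1$ and the chair) but then rests on an unverified claim that the $14$ relations restrict to a rank-$2$ system in those two variables; the paper instead gives a short direct argument (every induced chair contains an induced claw while $P_4+K_1$ does not, so the chair count follows from the order-$4$ claw count and the already-determined counts of $K_{1,3}+K_1$ and $K_{1,4}$, and then $P_4+K_1$ is pinned down by $\binom n5$), which avoids any further rank check. Second, be careful with the complementation bookkeeping: the paper's system lives entirely in $\ol{G}$ (its unknowns are induced-subgraph counts of $\ol{G}$), and only at the very end does one pass to $G$; if you work in $G$ directly you must translate the clique-cover statistics of Lemma~\ref{lem:subgraphs} into stable-set covers, which is what your $\beta_{\mu,H}$ coefficients implicitly do, but the $m\le 4$ "boundary" contributions you wave at also need to be made explicit for the rank claim to be checkable.
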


\begin{example}\label{ex:distinguishing}
    The authors of \cite{kromatic2023} give examples of three pairs of graphs with the same $X_G$ but different $\ol{X}_G$. We can use Theorems \ref{thm:subgraphs_4} and \ref{thm:subgraphs_5} to distinguish all three pairs:
    \begin{itemize}
        \item For \im{example_1_1} and \im{example_1_2}, the second graph has an induced \ims{K4-e} while the first does not.
        \item For \im{example_2_1} and \im{example_2_2}, the first graph has an induced \im{cricket} while the second does not.
        \item For \imxl{example_3_1} and \imxl{example_3_2}, the second graph has an induced \ims{K4-e+P3} while the first does not.
    \end{itemize}
\end{example}

Our third result concerns an infinite family of subgraphs that can be counted using $\ol{X}_G$:

\begin{theorem}\label{thm:stars}
    For each pair $h,k \ge 0,$ the number of induced copies in $G$ of the disjoint union of an $h$-vertex star and $k$ isolated vertices can be recovered from $\ol{X}_G.$
\end{theorem}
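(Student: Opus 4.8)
The plan is to route everything through the independence polynomial. For a graph $H$ write $p_H(x) = \sum_{j\ge 0} s_j(H)\,x^j$, where $s_j(H)$ is the number of $j$-element independent sets of $H$; thus $p_H(0)=1$ and $[x^1]p_H = |V(H)|$. The first step is to establish, in the completed ring of symmetric functions, the expansion
$$\overline{X}_G \;=\; \sum_{W\subseteq V(G)} (-1)^{|V(G)|-|W|} \prod_{i\ge 1} p_{G[W]}(x_i).$$
This comes from reorganizing a proper set coloring $\kappa$ by color: the set $I_i := \{v : i\in\kappa(v)\}$ is independent in $G$ for each $i\ge 1$, and $\overline{X}_G$ equals the sum of $\prod_i x_i^{|I_i|}$ over all tuples $(I_i)_{i\ge 1}$ of independent sets with $\bigcup_i I_i = V(G)$. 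Inclusion--exclusion on the covering condition removes the coupling between the $I_i$, and the resulting sum over unconstrained tuples $\{I_i \subseteq W\}$ factors color-by-color as $\prod_i p_{G[W]}(x_i)$ (here $W$ runs over the complement of the forced-uncovered set).

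The second step is to extract, for each polynomial $P$ with $P(0)=1$, the count $N_P(G) := \#\{W\subseteq V(G) : p_{G[W]} = P\}$. Since $\prod_i p_{G[W]}(x_i)$ depends on $W$ only through the polynomial $p_{G[W]}$, the expansion regroups as $\overline{X}_G = \sum_P (-1)^{|V(G)|-[x^1]P}\,N_P(G)\prod_i P(x_i)$. Writing $\prod_i P(x_i) = \exp\!\left(\textstyle\sum_{m\ge1}([x^m]\log P)\,p_m\right)$ exhibits these as pairwise distinct group-like elements of the completed ring, hence linearly independent (equivalently, specializing each $p_m$ to an indeterminate reduces the claim to linear independence of distinct exponential functions); so $\overline{X}_G$ determines every $N_P(G)$. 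This is presumably the engine behind Theorems~\ref{thm:subgraphs_4} and \ref{thm:subgraphs_5} as well: $\overline{X}_G$ records the number of induced subgraphs realizing each independence polynomial, and isomorphism types sharing one (the paw and $C_4$, say, or $P_4$ and $K_3 \sqcup K_1$) cannot be separated this way --- which is why those ambiguities disappear over a tree.

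The third step is the content specific to Theorem~\ref{thm:stars}: a uniqueness lemma stating that, up to isomorphism, the disjoint union $F$ of an $h$-vertex star and $k$ isolated vertices is the only graph with independence polynomial $p_F$. When $h\le 1$ the graph $F$ is edgeless, and $p_{F'}=(1+x)^m$ forces $F'$ to be edgeless, so there is nothing to prove. When $h\ge 2$, $F$ has $h+k$ vertices and independence number $h+k-1 = |V(F)|-1$; any $F'$ with $p_{F'}=p_F$ then has $h+k$ vertices and an independent set omitting a single vertex $v$, so every edge of $F'$ is incident to $v$, whence $F'$ is a star on $\{v\}\cup N(v)$ together with isolated vertices, i.e. $F'\cong S_{a+1}\sqcup b\,K_1$ with $a\ge 1$, $b\ge 0$, $a+b = h+k-1$. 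Comparing $p_{S_{a+1}\sqcup b\,K_1}(x) = x(1+x)^b + (1+x)^{a+b}$ with $p_F(x) = x(1+x)^k + (1+x)^{h+k-1}$ forces $b=k$ and $a=h-1$, so $F'\cong F$. Hence $N_{p_F}(G)$ equals the number of induced copies of $F$ in $G$, and it is recovered from $\overline{X}_G$ by the second step.

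The main difficulty is conceptual rather than computational: $\overline{X}_G$ sees induced-subgraph counts only bundled by independence polynomial, so the theorem succeeds precisely because stars-plus-isolated-vertices form a family each member of which is alone in its bundle --- locating this rigidity (the observation that $\alpha(F)=|V(F)|-1$ pins down the whole graph) is the real point. The parts that need genuine care are the linear-independence step in the completed symmetric function ring and the small and degenerate cases $h\in\{1,2\}$ of the lemma; everything else is routine bookkeeping.
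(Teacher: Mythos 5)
Your proof is correct, and it takes a genuinely different route from the paper's. The paper works in the complement graph and leans on the $\overline{m}$-expansion: it counts covers of $j$ vertices of $\overline{G}$ by one $K_{j-1}$ together with $i$ edges for $i=1,\dots,j-1$, and solves the resulting $(j-1)\times(j-1)$ linear system for the counts of the graphs $K_{j-1}\sqcup i\cdot e$, whose complements are stars plus isolated vertices. You instead prove the identity $\overline{X}_G=\sum_{W\subseteq V(G)}(-1)^{|V(G)|-|W|}\prod_i p_{G[W]}(x_i)$ (correct: it is the M\"obius inversion relating $\overline{X}_G$ to Stanley's $Y$-function of induced subgraphs), deduce from linear independence of the elements $\prod_i P(x_i)$ that $\overline{X}_G$ is \emph{equivalent} to the data $\{N_P(G)\}_P$ recording how many induced subgraphs realize each independence polynomial, and then show a star plus isolated vertices is the unique graph with its independence polynomial. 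This buys more than the paper's argument: it characterizes exactly which linear combinations of induced-subgraph counts $\overline{X}_G$ sees (those constant on bundles with equal independence polynomial), it explains the residual ambiguities in Theorems~\ref{thm:subgraphs_4} and~\ref{thm:subgraphs_5} (e.g.\ $P_4$ and $K_3\sqcup K_1$ both have polynomial $1+4x+3x^2$), and it replaces the paper's linear system --- whose invertibility is asserted rather than proved --- with the clean rigidity observation that $\alpha(F)=|V(F)|-1$ forces every edge through a single vertex. The one step you should tighten is the linear-independence claim: the specialization $p_m\mapsto\lambda_m t$ is not obviously well defined on the completed ring (a fixed power of $t$ would receive contributions from infinitely many degrees), so argue directly --- extracting the coefficient of $x_1^{a_1}\cdots x_k^{a_k}$ from $\sum_P\alpha_P\prod_iP(x_i)=0$ gives $\sum_P\alpha_P\prod_{\ell}[x^{a_\ell}]P=0$ for every finite multiset $\{a_\ell\}$ of positive integers, and the distinct multiplicative functionals $A\mapsto\prod_{a\in A}[x^a]P$ on such multisets are linearly independent by the usual Dedekind--Artin induction, which survives zero values. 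Your case analysis in the uniqueness lemma, including $h\le 1$ and $h=2$, is complete.
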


In \cite{martin2008distinguishing}, Martin, Morin, and Wagner (2008) proved the stronger fact that the the degree sequence of a tree can be recovered from the ordinary symmetric function $X_G.$ Our proof of Theorem \ref{thm:stars} is shorter and uses different methods, which suggests that proving that $\ol{X}_G$ distinguishes all trees may be easier than proving that $X_G$ does.

The remainder of this paper is organized as follows: in Section \ref{sec:background}, we give some background on graph theory, symmetric functions, and the chromatic symmetric function. In Section \ref{sec:lemma}, we prove a key lemma that will be used in several of our proofs. In Sections \ref{sec:subgraphs_4}, \ref{sec:subgraphs_5}, and \ref{sec:stars}, we give the proofs of our three main results.

\section{Background}\label{sec:background}

Let $\N = \{1,2,3,\dots\}$ denote the set of positive integers, and $2^\N$ its power set.

A \emph{\tb{\tcb{graph}}} $G$ is a finite set $V(G)$ of \emph{\tb{\tcb{vertices}}} together with a set $E(G)$ of \emph{\tb{\tcb{edges}}} such that each edge is an unordered pair $vw$ of vertices $v,w\in V(G).$ Two vertices $v$ and $w$ are \emph{\tb{\tcb{adjacent}}} or \emph{\tb{\tcb{neighbors}}} if $vw\in E(G).$ An \emph{\tb{\tcb{isolated vertex}}} is a vertex with no neighbors. The \emph{\tb{\tcb{order}}} of $G$ is $|V(G)|$. A \emph{\tb{\tcb{tree}}} is a connected graph with no cycles. The \emph{\tb{\tcb{complete graph}}} $K_n$ is the graph on $n$ vertices with every two vertices adjacent. A \emph{\tb{\tcb{star graph}}} is a graph where one vertex is adjacent to all others but no other pairs of vertices are adjacent. Two graphs $G$ and $H$ are \emph{\tb{\tcb{isomorphic}}} if there is a bijection $\phi:V(G)\to V(H)$ such that $$E(H) = \{\phi(v)\phi(w):vw\in E(G)\}.$$ A \emph{\tb{\tcb{subgraph}}} of $G$ is a graph $H$ with $V(H) \se V(G)$ and $E(H) \se E(G).$ An \emph{\tb{\tcb{induced subgraph}}} is a subgraph $H$ with $V(H) \se V(G)$ and $$E(H) = \{vw:v,w\in V(H),vw\in E(G)\}.$$ An induced subgraph is a \emph{\tb{\tcb{clique}}} if every two vertices are adjacent and a \emph{\tb{\tcb{stable set}}} if no two vertices are adjacent. The \emph{\tb{\tcb{complement graph}}} of $G$ is the graph $\ol{G}$ given by $$V(\ol{G}) = V(G), \ \ \ E(\ol{G}) = \{vw:v,w\in V(G),vw\not\in E(G)\}.$$ 

A \emph{\tb{\tcb{partition}}} $\lam = (\lam_1,\dots,\lam_{\ell(\lam)})$ is a nondecreasing sequence $\lam_1\ge \dots \ge \lam_{\ell(\lam)}$ of positive integers, and the numbers $\lam_1,\dots,\lam_{\ell(\lam)}$ are its \emph{\tb{\tcb{parts}}}. We write $\lam = \ell^{i_\ell}(\ell-1)^{i_{\ell-1}}\dots 3^{i_3}2^{i_3}1^{i_1}$ to denote the partition with $i_j$ parts of size $j$ for each $j = 1,2,\dots,\ell.$ The \emph{\tb{\tcb{length}}} $\ell(\lam)$ is the number of parts in $\lam.$

A \emph{\tb{\tcb{symmetric function}}} is a power series $f(x_1,x_2,\dots) \in \C[[x_1,x_2,x_3,\dots]]$ such $$f(x_1,x_2,x_3,\dots) = f(x_{\sigma(1)},x_{\sigma(2)},x_{\sigma(3)},\dots)$$ for any permutation $\sigma$ of $\N.$ For a partition $\lam,$ the \emph{\tb{\tcb{monomial symmetric function}}} $m_\lam$ is $$m_\lam(x_1,x_2,\dots) := \sum_{\substack{i_1,i_2,\dots,i_{\ell(\lam)}\in \N \\ \tn{ pairwise distinct}}}x_{i_1}^{\lam_1}x_{i_2}^{\lam_2}\dots x_{i_{\ell(\lam)}}^{\lam_{\ell(\lam)}}.$$ For each $d,$ the monomial symmetric functions of degree $d$ form a basis for the vector space of homogeneous symmetric functions of degree $d.$

A \emph{\tb{\tcb{proper coloring}}} $\kappa$ of a graph $G$ is a function $\kappa:V(G) \to \N$ such that if $vw\in E(G),$ then $\kappa(v)\ne \kappa(w).$ The \emph{\tb{\tcb{chromatic symmetric function}}} (introduced by Stanley (1995) \cite{stanley1995symmetric}) is $$X_G := \sum_\kappa \prod_{v\in V(G)}x_{\kappa(v)},$$ where $\kappa$ ranges over all proper colorings of $G$.

\section{Key lemma}\label{sec:lemma}

Our main tool will be the expansion formula given in \cite{kromatic2023} for $\ol{X}_G$ in the $\m$-basis, which they define as the following generalization of the $m$-basis above:

\begin{definition}[Crew, Pechenik, and Spirkl (2023), \cite{kromatic2023}]
    The \emph{\tb{\tcb{$K$-theoretic augmented monomial function}}} associated to a partition $\lam$ is $$\m_\lam := \ol{X}_{K_\lam},$$ where $K_\lam := (K_{\ell(\lam)},\omega)$ is the vertex-weighted complete graph on $\ell(\lam)$ vertices with weights $\omega(i) := \lam_i.$
\end{definition}

They show that $\ol{X}_G$ can be written as a linear combination of the $\m_\lam$'s with positive integer coefficients:

\begin{theorem}[Crew, Pechenik, and Spirkl (2023), \cite{kromatic2023}]\label{thm:m_expansion}
    For each $\lam,$ the coefficient $[\m_\lam]$ of $\m_\lam$ in the $\m$-expansion of $\ol{X}_G$ counts the number of ways to cover $V(G)$ with $\ell(\lam)$ distinct (but possibly overlapping) stable sets whose sizes are the parts of $\lam.$
\end{theorem}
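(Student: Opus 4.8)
The plan is to expand $\ol{X}_G$ by reorganizing its defining sum. First I would set up a weight-preserving bijection between proper set colorings of $G$ and families $(A_i)_{i\in\N}$ of stable sets of $G$, indexed by the colors, with only finitely many $A_i$ nonempty and $\bigcup_i A_i = V(G)$: given $\kappa$, put $A_i := \{v\in V(G): i\in\kappa(v)\}$, which is stable because $\kappa$ is proper; given $(A_i)$, put $\kappa(v) := \{i : v\in A_i\}$, which is nonempty and finite since the $A_i$ cover $V(G)$ and only finitely many are nonempty. Under this bijection the monomial of $\kappa$ becomes $\prod_{i\in\N} x_i^{\sum_{v\in A_i}\omega(v)}$, so for a vertex-weighted graph $(G,\omega)$,
$$\ol{X}_{(G,\omega)} \;=\; \sum_{(A_i)}\ \prod_{i\in\N} x_i^{\sum_{v\in A_i}\omega(v)}.$$

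Next I would group this sum according to the \emph{set} $\mc{S}$ of distinct nonempty stable sets that the family $(A_i)$ uses. Specifying $(A_i)$ is the same as specifying such a set $\mc{S}$ (which must satisfy $\bigcup\mc{S} = V(G)$) together with the surjection $h$ from $T := \{i : A_i\neq\varnothing\}$ onto $\mc{S}$ given by $i\mapsto A_i$, so
$$\ol{X}_G \;=\; \sum_{\substack{\mc{S}\text{ a set of distinct nonempty}\\ \text{stable sets, }\bigcup\mc{S}=V(G)}}\ \ \sum_{\substack{T\se\N\text{ finite}\\ h\colon T\twoheadrightarrow\mc{S}}}\ \prod_{i\in T} x_i^{|h(i)|}.$$
Now apply the same bijection in reverse: since the stable sets of a complete graph are exactly $\varnothing$ and the singletons, the data of a surjection $h\colon T\twoheadrightarrow\mc{S}$ is precisely the data of a family of stable sets covering the vertex set $\mc{S}$, and the inner sum is exactly $\ol{X}_{K_{\mc{S}}}$, where $K_{\mc{S}}$ is the complete graph on vertex set $\mc{S}$ with vertex $S$ weighted by $|S|$. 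Since $\ol{X}$ is an isomorphism invariant of vertex-weighted graphs and $K_{\mc{S}}$ is isomorphic to $K_{\mu(\mc{S})}$, where $\mu(\mc{S})$ denotes the partition whose parts are the sizes of the members of $\mc{S}$, the inner sum equals $\m_{\mu(\mc{S})}$. Grouping the outer sum by the value of $\mu(\mc{S})$ then gives $\ol{X}_G = \sum_\lam c_\lam\,\m_\lam$ with $c_\lam$ equal to the number of sets of $\ell(\lam)$ distinct stable sets of $G$ that cover $V(G)$ and have sizes the parts of $\lam$ --- which is the asserted count. These manipulations are legitimate because for a fixed graph there are only finitely many stable sets, hence finitely many such $\mc{S}$, so the outer sum is finite and the identity is one between genuine power series.

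To finish I would check that $c_\lam$ is indeed the coefficient $[\m_\lam]$, i.e.\ that the $\m_\lam$ are linearly independent. This is immediate from lowest-degree triangularity: the minimal-degree part of $\m_\lam = \ol{X}_{K_\lam}$ comes from those set colorings of $K_\lam$ that assign each of its $\ell(\lam)$ vertices a single color (necessarily all distinct, by disjointness), and equals the augmented monomial symmetric function $\til{m}_\lam = \big(\prod_{j\ge1} m_j(\lam)!\big)\, m_\lam$, where $m_j(\lam)$ is the number of parts of $\lam$ equal to $j$; since augmented monomials of a fixed degree are linearly independent, so are the $\m_\lam$. (Alternatively one may simply invoke that $\{\m_\lam\}$ is a basis, as recorded when it is introduced.)

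I do not expect a substantial obstacle: essentially all of the work is in choosing to group the defining sum by the \emph{unordered set} of distinct stable sets used --- rather than by an ordered list or a multiset of sets --- and in recognizing the resulting inner generating function as $\ol{X}$ of a vertex-weighted complete graph. The one delicate point is the word ``distinct'': several members of $\mc{S}$ may have equal size, so $\mu(\mc{S})$ can have repeated parts and the isomorphism $K_{\mc{S}}\cong K_{\mu(\mc{S})}$ must be chosen to respect weights; carrying $\mc{S}$ along as an unordered set throughout, never as a sequence, is what keeps the bookkeeping honest.
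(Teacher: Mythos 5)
This statement is imported verbatim from \cite{kromatic2023} and the paper gives no proof of it, so there is no internal argument to compare yours against. Your proof is correct and is the natural one (essentially the argument in the cited source): regroup the defining sum over proper set colorings by the unordered set $\mc{S}$ of distinct color classes, identify the inner sum over surjections from finite color sets onto $\mc{S}$ as $\ol{X}$ of the weighted complete graph on $\mc{S}$, i.e.\ as $\m_{\mu(\mc{S})}$, and justify uniqueness of the coefficients via the lowest-degree identification of $\m_\lam$ with the augmented monomial $\til{m}_\lam$. You correctly handle the two delicate points: that grouping must be by the unordered \emph{set} of classes (which is what produces the word ``distinct'' in the count), and that the finiteness of the family of stable sets of $G$ makes all the rearrangements legitimate.
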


Our proofs will make use of the following lemma, which follows from Theorem \ref{thm:m_expansion}:

\begin{lemma}\label{lem:subgraphs}
    For any $i_1,\dots,i_\ell,$ the number $\#(i_1,i_2,\dots,i_\ell)$ of ways to cover exactly $i_1$ of the vertices of $\ol{G}$ using $i_2$ edges of $\ol{G}$, $i_3$ $K_3$'s in $\ol{G}$, $\dots,$ and $i_\ell$ $K_\ell$'s in $\ol{G}$ can be determined from $\ol{X}_G.$ In particular, the number $\#(i,j)$ of (not necessarily induced) subgraphs of the complement graph $\ol{G}$ with $i$ vertices, $j$ edges, and no isolated vertices can be determined from $\ol{X}_G.$ 
\end{lemma}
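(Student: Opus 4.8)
The plan is to deduce Lemma~\ref{lem:subgraphs} from Theorem~\ref{thm:m_expansion} by a double-counting / inclusion-exclusion argument on stable-set covers of $G$. Recall that a stable set of $G$ of size $k$ is exactly a clique of size $k$ in $\ol{G}$, i.e.\ a $K_k$ in $\ol{G}$ (with $k=1$ meaning a single vertex and $k=2$ meaning an edge). So the coefficient $[\m_\lambda]$ in $\ol{X}_G$ counts ordered-up-to-reordering-of-equal-parts collections of distinct cliques of $\ol{G}$ with sizes $\lambda_1,\dots,\lambda_{\ell(\lambda)}$ whose union is all of $V(G)$. The first step is to recast this: for $\lambda = \ell^{i_\ell}\cdots 2^{i_2}1^{i_1}$, the coefficient $[\m_\lambda]$ equals the number of ways to choose an (unordered) multiset consisting of $i_1$ distinct vertices, $i_2$ distinct edges, $i_3$ distinct $K_3$'s, \dots, $i_\ell$ distinct $K_\ell$'s in $\ol{G}$, subject only to the constraint that these cliques collectively \emph{cover} $V(G)$. (Distinctness is only required among cliques of the \emph{same} size, which matches the definition of $\m_\lambda$ via the complete graph $K_\lambda$ with repeated weights; cliques of different sizes are automatically distinct.)

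The key observation is that whether a fixed collection of cliques covers all of $V(G)$ depends only on the \emph{underlying induced subgraph structure}, but more usefully: I want to pass from "covers all of $V(G)$" to "covers exactly a prescribed set of vertices." So the second step is a deletion argument. Fix target numbers $i_1,\dots,i_\ell$ and let $\#(i_1,\dots,i_\ell)$ be the number of collections of that profile covering exactly some vertex set $S\subseteq V(G)$ with $|S| = i_1 + (\text{vertices used})$ — more precisely, the number of ways to pick $i_1$ vertices, $i_2$ edges, \dots, $i_\ell$ $K_\ell$'s in $\ol{G}$ that are all distinct within each size class and whose union has no "wasted" vertex, i.e.\ $i_1$ counts exactly the vertices not already covered by the chosen edges/triangles/etc. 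Equivalently this is the count of (not necessarily induced) subgraphs of $\ol{G}$ with the prescribed clique decomposition profile. The relation to $\ol{X}_G$ comes by summing over all ways to distribute the "leftover" vertices: the coefficient $[\m_\lambda]$ of $\ol{X}_G$, where $\lambda$ has $i_1$ ones and $i_j$ copies of $j$ for $j\ge 2$, can be written as a sum over $0 \le i_1' \le i_1$ of $\binom{n - c}{i_1'}$-type terms — here $c = |V(G)| - (\text{number of vertices the non-singleton cliques can be chosen to leave uncovered})$ — no: cleaner is to stratify by the exact vertex set $U$ covered by the non-singleton cliques, and observe that then the $i_1$ singletons must cover $V(G)\setminus U$ and may reuse up to $|U|$ vertices of $U$ freely. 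This gives a clean triangular system expressing the $[\m_\lambda]$'s in terms of the $\#(\cdots)$'s, which I then invert.

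Concretely: let $N_{U}$ be the number of ways to choose distinct cliques of the non-singleton sizes (with multiplicities $i_2,\dots,i_\ell$) whose union is exactly $U$. Then
\[
[\m_\lambda] \;=\; \sum_{U \subseteq V(G)} N_U \cdot \binom{|U|}{\,i_1 - (|V(G)| - |U|)\,},
\]
where the binomial is zero unless $|V(G)| - |U| \le i_1 \le |V(G)|$. Grouping the $U$'s by cardinality $u$ and writing $a_u := \sum_{|U|=u} N_U$, this reads $[\m_\lambda] = \sum_u a_u \binom{u}{i_1 - n + u}$ with $n = |V(G)|$ (which is itself recoverable from $\ol{X}_G$, e.g.\ as the degree). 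Now I run this for the family of partitions $\lambda$ with a \emph{fixed} multiset $(i_2,\dots,i_\ell)$ of non-singleton parts and $i_1 = 0,1,2,\dots,n$: this is a finite linear system in the unknowns $a_u$ ($u = 0,\dots,n$) with an invertible coefficient matrix (it is unitriangular after reindexing, since $\binom{u}{i_1-n+u}$ is supported on $i_1 \ge n-u$ and equals $1$ when $i_1 = n - u$). Solving recovers all the $a_u$ from $\ol{X}_G$, and in particular $a_n = \sum_{|U|=n} N_U = N_{V(G)}$ — the number of ways the non-singleton cliques cover everything — and the "exactly $i_1$ uncovered vertices" count $\#(i_1,\dots,i_\ell) = a_{n - i_1}$ (the number of collections of non-singleton cliques covering exactly $n - i_1$ vertices, together with $i_1$ more singleton vertices to cover the rest, which is forced). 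The final sentence of the lemma is then the special case $\ell = 2$, $(i_2) = (j)$, $i_1 = i - 2j + (\text{number of vertices among the }j\text{ edges})$ — i.e.\ a subgraph of $\ol{G}$ with $i$ vertices, $j$ edges, and no isolated vertices is exactly a collection of $j$ distinct edges of $\ol{G}$ covering exactly $i$ vertices, so its count is $a_i$ in the system above.

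The main obstacle is bookkeeping the distinctness conventions and the "exactly versus at least" passage cleanly — in particular making sure that in the inversion step the roles of repeated parts are handled correctly (the $\m_\lambda$ basis only forces distinctness among equal parts, which is precisely what makes "$i_j$ distinct $K_j$'s" the right object) and that the coefficient matrix $\big(\binom{u}{i_1 - n + u}\big)_{i_1, u}$ is genuinely invertible over $\Z$. The latter is true because, reading $i_1$ from $n$ downward, the matrix is lower-triangular with unit diagonal, so no analytic input is needed; but I should state this carefully rather than wave at it. Everything else — that stable sets of $G$ are cliques of $\ol{G}$, that $n = |V(G)|$ is known, that the system has finitely many equations for each fixed non-singleton profile — is routine.
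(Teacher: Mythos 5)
Your argument is the same as the paper's: you stratify the covers counted by $[\m_\lambda]$ according to the exact vertex set covered by the non-singleton cliques and invert the resulting unitriangular system in the counts $a_u$, which is precisely the paper's downward recursion $\#(i_1,\dots,i_\ell) = [\m_{\ell^{i_\ell}\cdots 2^{i_2}1^{n-i_1}}] - \sum_{k>i_1}\#(k,i_2,\dots,i_\ell)\binom{k}{i_1}$ written as a matrix inversion. One small slip: under the lemma's convention $i_1$ is the number of vertices covered by the non-singleton cliques (not the number of singleton parts), so $\#(i_1,\dots,i_\ell)=a_{i_1}$ rather than $a_{n-i_1}$; this is harmless since you recover every $a_u$, and you use the correct identification in your final sentence about $\#(i,j)$.
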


\begin{proof}
    By Theorem \ref{thm:m_expansion}, $[\m_\lam]$ counts the number of ways to cover $V(G)$ with stable sets whose sizes are the parts of $\lam.$ Note first that the number of vertices $n := |V(G)|$ can be determined from $\ol{X}_G$ because $[\m_{1^k}] = 0$ for all $k\ne n$ while $[\m_{1^n}] = 1.$ 
    
    To compute $\#(i_1,i_2,\dots,i_\ell),$ note that by Theorem \ref{thm:m_expansion}, $[\m_{\ell^{i_\ell}(\ell-1)^{i_{\ell-1}}\dots 3^{i_3}2^{i_2}1^{n-i_1}}]$ counts the number of ways to cover $V(G)$ using $n-i_1$ singletons and $i_j$ stable sets of size $j$ for each $j = i_2,\dots,i_\ell.$ Translating this to $\ol{G},$ $[\m_{\ell^{i_\ell}(\ell-1)^{i_{\ell-1}}\dots 3^{i_3}2^{i_2}1^{n-i_1}}]$ counts the number of ways to cover $\ol{G}$ with $n-i_1$ singletons together with $i_j$ \emph{cliques} $K_j$ for each $j.$ Now we split this count into cases based on how many of the vertices covered by one of the $n-i_1$ singletons are also covered by one of the cliques $K_j.$ The number of ways to choose one of the covers counted by $[\m_{\ell^{i_\ell}(\ell-1)^{i_{\ell-1}}\dots 3^{i_3}2^{i_2}1^{n-i_1}}]$ that includes at least $k$ vertices that are each covered by at least one stable set besides a singleton is $\#(k,i_2\dots,i_\ell)\binom k{i_1}.$ To see this, note first that by the definition of our notation, there are $\#(k,i_2,\dots,i_\ell)$ ways to cover $k$ vertices using $\ell_j$ cliques $K_j$ for each $j$. Then, $n-k$ of the singletons are needed to cover the remaining $n-k$ uncovered vertices, so $(n-i_1) - (n-k) = k - i_1$ singletons will cover one of the $k$ already covered vertices. So, there are $\binom k{k-i_1} = \binom k{i_1}$ ways to choose which $k-i_1$ of those $k$ vertices are covered by a singleton in addition to another stable set. Putting this together and then rearranging to isolate the value $\#(i_1,i_2,\dots,i_\ell)$ that we want to solve for, we get $$\#(i_1,i_2,\dots,i_\ell) = [\m_{\ell^{i_\ell}(\ell-1)^{i_{\ell-1}}\dots 3^{i_3}2^{i_2}1^{n-i_1}}] - \sum_{k=i_1+1}^n \#(k,i_2\dots,i_\ell)\binom k{i_1}.$$ Thus, we can recursively compute $\#(i_1,i_2,\dots,i_\ell)$ by fixing $i_2,\dots,i_\ell$ and letting $i_1$ range from $n$ down to 1, so the values $\#(i_1,i_2,\dots,i_\ell)$ are all computable from $\ol{X}_G$ by induction.
    \end{proof}

    \section{Proof of Theorem \ref{thm:subgraphs_4}}\label{sec:subgraphs_4}

    Finding induced subgraphs of $G$ isomorphic to $H$ is equivalent to finding induced subgraphs of $\ol{G}$ isomorphic to $\ol{H}$, so we will instead focus on induced subgraphs of $\ol{G},$ as Lemma \ref{lem:subgraphs} makes them easier to think about. We will use a picture of $H$ to represent the number of induced subgraphs of $\ol{G}$ isomorphic to $H,$ and unless stated otherwise, all references to subgraphs and edges will be for $\ol{G}$ rather than for $G.$ The pictures of the graphs are taken from \cite{smallgraphlist}.

    From \cite{martin2008distinguishing}, the number of vertices, edges, triangles, and induced copies of \im{P3} in $G$ can be computed from $X_G,$ and thus also from $\ol{X}_G$ since the lowest degree terms of $\ol{X}_G$ determine $X_G.$ For $|V(H)|=4$, there are 11 total choices of $H.$ We can directly compute 5 of the values:
    \begin{align}
        \ims{K4} &= [\m_{41^{n-4}}] \label{eqn:K4} \\ 
        \ims{K4-e} &= \#(4,5) - 6\cdot\ims{K4} \label{eqn:K4-e} \\
        \ims{K3+e} &= (\ims{e}-3)\cdot \imxs{K3} - [\m_{321^{n-5}}] - 12\cdot\ims{K4} - 4\cdot\ims{K4-e}  \label{eqn:K3+e} \\
        \ims{C4} &= \#(4,4) - 15\cdot \ims{K4} - 5\cdot \ims{K4-e} - \ims{K3+e} \label{eqn:C4} \\
        \im{K3+v} &= (n-3)\cdot \imxs{K3} - 4\cdot \ims{K4} - 2\cdot \ims{K4-e} - \ims{K3+e}. \label{eqn:K3+v}
    \end{align}
    (\ref{eqn:K4}) is immediate, and the reasoning for the other equations is as follows:
    \begin{itemize}
        \item (\ref{eqn:K4-e}): We count subgraphs of $\ol{G}$ with 5 edges and 4 vertices, of which there are $\#(4,5)$ total. For each \ims{K4}, there are 6 such subgraphs since we can choose any of the 6 edges to omit, and for each \ims{K4-e}, there is one. 
        
        \item (\ref{eqn:K3+e}): We count ways to choose a triangle plus an edge not contained in the triangle, of which there are  $(\ims{e}-3)\cdot \imxs{K3}$ total. Then $[\m_{321^{n-5}}]$ counts the ways where the triangle and the edge do not overlap, for each \ims{K4} there are 12 ways (4 options for the triangle, then 3 for the edge), for each \ims{K4-e} there are 4 ways (2 options for the triangle, then 2 for the edge), and for each \ims{K3+e} there is one way.
        
        \item (\ref{eqn:C4}): We count ways to cover 4 vertices with 4 edges. There are $\#(4,4)$ ways total. For each \ims{K4}, there are $\binom 64 = 15$ ways since we choose which 4 edges to include, for each \ims{K4-e}, there are $\binom 54 = 5$ ways, and for each \ims{C4} or \ims{K3+e}, there is just one way since we need all 4 edges. 
        
        \item (\ref{eqn:K3+v}): We count ways to choose a triangle plus a vertex not contained in the triangle. There are $(n-3)\cdot\imxs{K3}$ ways total. For each \ims{K4}, there is one way since \ims{K4} has 4 triangles, for each \ims{K4-e}, there are 2 ways since it has 2 triangles, and for each \ims{K3+e} or \im{K3+v}, there is one way.
    \end{itemize}
    For the remaining 6 cases where $|V(H)|=4$, we set up a system of 5 equations in 6 variables:
    $$\begin{pmatrix}
        1 & 1 & 1 & 1 & 1 & 1 \\
        3 & 3 & 2 & 2 & 1 & 0 \\
        1 & 1 & 0 & 0 & 0 & 0 \\
        1 & 0 & 0 & 1 & 0 & 0 \\
        2 & 3 & 1 & 0 & 0 & 0 \\
    \end{pmatrix}
    \begin{pmatrix}
        \im{P4} \\ \\
        \imxs{claw} \\ \\
        \ims{P3+v} \\ \\
        \imxs{2e} \\ \\
        \im{e+2v} \\ \\
        \imxs{4v}
    \end{pmatrix}
    = \begin{pmatrix}
        b_1 \\ b_2 \\ b_3 \\ b_4 \\ b_5 
    \end{pmatrix},$$ where
    \begin{align*}
        b_1 &= \binom n4 - \ims{K4} - \ims{K4-e} - \ims{K3+e} - \ims{C4} - \im{K3+v} \\
        b_2 &= \binom{n-2}2\cdot \ims{e} - 6\cdot \ims{K4} - 5\cdot\ims{K4-e} - 4\cdot \ims{K3+e} - 4\cdot \ims{C4} - 3\cdot \im{K3+v} \\
        b_3 &= \#(4,3) - 16\cdot\ims{K4} - 8\cdot\ims{K4-e} - 3\cdot \ims{K3+e} - 4\cdot \ims{C4} \\
        b_4 &= [\m_{221^{n-4}}] - 3\cdot\ims{K4} - 2\cdot\ims{K4-e} - \ims{K3+e} - 2\cdot\ims{C4} \\
        b_5 &= (n-3)\cdot\im{P3} - 2\cdot\ims{K4-e} - 2\cdot \ims{K3+e} - 4\cdot \ims{C4}
    \end{align*}

    The explanations for these equations are as follows:
    \begin{itemize}
        \item \tb{Row 1:} We count all ways to choose four vertices, which on the one hand is $\binom n4,$ and on the other is the sum over all $H$ with $|V(H)|=4$ of the number of induced subgraphs isomorphic to $H$.
        \item \tb{Row 2:} We count ways to choose an edge plus two other vertices, which on the one hand is $\binom{n-2}2$ times the number of edges, and on the other is the sum over all $H$ of the number of induced copies of $H$ times the number of edges in $H$.
        \item \tb{Row 3:} We count ways to cover 4 vertices with 3 edges. The total number of ways is \#(4,3), and for each $H$, the number of ways is $\binom{|E(H)|}3$ minus the number of triangles in $H$, since to cover all 4 vertices of $V(H)$ with 3 edges from $E(H),$ we can choose any 3 edges that do not form a triangle. 
        \item \tb{Row 4:} We count ways to choose two nonoverlapping edges. The total number of ways is $[\m_{221^{n-4}}],$ and for each $H$, the number of ways equals the number of pairs of nonoverlapping edges in $H$.
        \item \tb{Row 5:} We count ways to choose an induced \im{P3} plus an extra vertex. The total number of ways is $(n-3)\cdot\im{P3}$, and the number of ways for each $H$ is the number of induced copies of \im{P3} in $H$.
    \end{itemize}
    
    The reduced row echelon form of the above matrix is
    $$\begin{pmatrix}
        1&0&0&1&0&0\\
        0&1&0&-1&0&0\\
        0&0&1&1&0&0\\
        0&0&0&0&1&0\\
        0&0&0&0&0&1\\
    \end{pmatrix}.$$
    Since the last two rows each have a single 1 and the rest 0's, the number of copies in $\ol{G}$ of the corresponding graphs can be computed from $\ol{X}_G$, so the number of copies in $\ol{G}$ of the following 7 graphs can be determined:
    $$\arraycolsep=10pt
    \begin{array}{ccccccc}
        \ims{K4} & \ims{K4-e} & \ims{K3+e} & \ims{C4} & \im{K3+v} & \im{e+2v} & \imxs{4v} \\
    \end{array}$$
    Taking complements, the number of copies in $G$ of the following 7 graphs can be determined:
    $$\arraycolsep=10pt
    \begin{array}{cccccccc}
        \imxs{4v} & \im{e+2v} & \ims{P3+v} & \imxs{2e} & \ims{claw} & \ims{K4-e} & \ims{K4} \\
    \end{array}$$
    Finally, note that for $T$ a tree, this lets us determine the induced number of copies in $T$ of all order 4 forests except \im{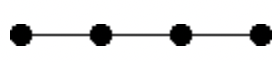}. But since $T$ a tree, the number of copies of \im{P4.png} can then also be computed since it is the only remaining possible 4 vertex subgraph of $T$. \qed

    \section{Proof of Theorem \ref{thm:subgraphs_5}}\label{sec:subgraphs_5}

    There are 34 graphs with $|V(H)|=5.$ We first directly compute 6 of the values:
    \begin{align}       
        \ims{K5} &= [\m_{51^{n-5}}] \label{eqn:K5} \\
        \ims{K5-e} &= \#(5,9) - 10\cdot \ims{K5} \label{eqn:K5-e} \\
        \im{K4+P3} &= \#(5,0,1,1) - 30\cdot \ims{K5} - 6\cdot\ims{K5-e} \label{eqn:K4+P3} \\
        \imxs{W4} &= \#(5,8) - 45\cdot \ims{K5} - 9\cdot\ims{K5-e} - \im{K4+P3} \label{eqn:W4} \\
        \im{K4+e} &= (\ims{e}-6)\cdot\ims{K4} - [\m_{421^{n-6}}] - 20\cdot\ims{K5} - 6\cdot\ims{K5-e} - 2\cdot\im{K4+P3} \label{eqn:K4+e} \\
        \im{K4+v} &= (n-4)\cdot\ims{K4} - 5\cdot\ims{K5} - 2\cdot\ims{K5-e} - \im{K4+P3} - \im{K4+e} \label{eqn:K4+v}
    \end{align}
    (\ref{eqn:K5}) is immediate, and the other equations are obtained as follows:
    \begin{itemize}
        \item (\ref{eqn:K5-e}): We count ways to cover 5 vertices with 9 edges. The total is $\#(5,9),$ for each \ims{K5}, there are 10 ways since any edge can be removed, and for each \ims{K5-e}, there is one way.
        \item (\ref{eqn:K4+P3}): We count ways to cover 5 vertices with a triangle and a $K_4.$ For each \ims{K5}, there are 5 ways to choose the $K_4$ and then $\binom42 = 6$ ways to choose the triangle, for $5\cdot6=30$ total, for each \ims{K5-e}, there are 2 ways to choose the $K_4$ and then 3 ways to choose the triangle, for $2\cdot3=6$ total, and for each \im{K4+P3}, there is one way.
        \item (\ref{eqn:W4}): We count ways to cover 5 vertices with 8 edges. The total is $\#(5,8).$ For each \ims{K5}, there are $\binom{10}2 = 45$ ways since we can choose any 2 edges to remove, for each \ims{K5-e}, there are 9 ways since we can choose any one edge to remove, and for each \imxs{W4} or \im{K4+P3}, there is one way.
        \item (\ref{eqn:K4+e}): We count ways to choose a $K_4$ plus an extra edge. There are $(\ims{e} - 6)\cdot\ims{K4}.$ ways total. There are $[\m_{421^{n-6}}]$ ways where the edge and triangle do not overlap, 20 for each \ims{K5}, 6 for each \ims{K5-e}, 2 for each \im{K4+P3}, and one for each \im{K4+e}.
        \item (\ref{eqn:K4+v}): We count ways to choose a $K_4$ and another vertex. There are $(n-4)\cdot\ims{K4}$ total, and for each 5 vertex graph $H$, the number of ways is the number of $K_4$ subgraphs of $H.$
    \end{itemize}
    For the remaining 28 values, we set up the system of 19 equations shown in Figure \ref{fig:matrix}, with the aid of Sage (\cite{sage}), where $\mathbf{b}$ is computable from $\ol{X}_G.$ The rows of the matrix are obtained as follows:
    \begin{figure}
        \centering
        \caption{System of linear equations to count induced subgraphs of order 5}
        \label{fig:matrix}
    $$\arraycolsep=3pt
    \begin{pmatrix}
5 & 2 & 1 & 1 & 1 & 0 & 0 & 0 & 0 & 0 & 0 & 0 & 0 & 0 & 0 & 0 & 0 & 0 & 0 & 0 & 0 & 0 & 0 & 0 & 0 & 0 & 0 & 0\\
0 & 3 & 2 & 0 & 0 & 4 & 2 & 1 & 3 & 1 & 1 & 1 & 0 & 0 & 0 & 0 & 0 & 0 & 0 & 0 & 0 & 0 & 0 & 0 & 0 & 0 & 0 & 0\\
0 & 0 & 0 & 0 & 0 & 0 & 0 & 0 & 2 & 0 & 1 & 0 & 0 & 0 & 2 & 0 & 0 & 0 & 0 & 0 & 2 & 1 & 0 & 0 & 0 & 0 & 1 & 0\\
0 & 0 & 0 & 0 & 0 & 0 & 0 & 0 & 0 & 0 & 1 & 2 & 0 & 0 & 0 & 2 & 2 & 0 & 0 & 0 & 0 & 1 & 4 & 0 & 2 & 2 & 1 & 2\\
0 & 0 & 0 & 0 & 0 & 0 & 0 & 0 & 0 & 0 & 0 & 0 & 1 & 1 & 0 & 0 & 0 & 3 & 0 & 0 & 0 & 0 & 0 & 0 & 1 & 0 & 0 & 2\\
0 & 0 & 0 & 0 & 0 & 0 & 0 & 0 & 0 & 0 & 0 & 0 & 0 & 0 & 1 & 0 & 1 & 0 & 3 & 0 & 0 & 0 & 0 & 0 & 0 & 2 & 1 & 1\\
0 & 0 & 0 & 0 & 0 & 1 & 1 & 2 & 0 & 2 & 0 & 0 & 0 & 2 & 0 & 1 & 0 & 0 & 0 & 3 & 3 & 3 & 1 & 5 & 2 & 1 & 2 & 0\\
0 & 0 & 0 & 1 & 4 & -1 & 0 & -2 & 0 & 1 & 0 & 2 & 0 & 1 & 0 & 0 & 1 & 2 & 2 & -1 & -3 & -1 & -1 & 0 & 0 & 0 & 0 & 0\\
0 & 0 & 2 & 4 & 4 & 0 & 2 & 2 & 0 & 2 & 2 & 2 & 4 & 2 & 2 & 2 & 2 & 2 & 2 & 2 & 0 & 0 & 0 & 0 & 0 & 0 & 0 & 0\\
0 & 0 & 0 & 0 & 1 & 0 & 0 & 0 & 0 & 1 & 0 & 3 & 0 & 4 & 0 & 3 & 8 & 12 & 20 & 1 & 1 & 4 & 11 & 5 & 12 & 23 & 9 & 26\\
0 & 0 & 0 & 0 & 0 & 0 & 0 & 0 & 0 & 0 & 0 & 1 & 0 & 1 & 0 & 1 & 5 & 6 & 18 & 0 & 0 & 1 & 6 & 1 & 6 & 19 & 5 & 20\\
0 & 0 & 0 & 0 & 0 & 0 & 0 & 0 & 0 & 0 & 0 & 0 & 0 & 0 & 0 & 0 & 1 & 1 & 7 & 0 & 0 & 0 & 1 & 0 & 1 & 7 & 1 & 7\\
0 & 0 & 0 & 0 & 0 & 0 & 0 & 0 & 0 & 0 & 0 & 0 & 0 & 0 & 0 & 0 & 0 & 0 & 1 & 0 & 0 & 0 & 0 & 0 & 0 & 1 & 0 & 1\\
0 & 0 & 0 & 0 & 0 & 0 & 0 & 0 & 0 & 0 & 0 & 0 & 0 & 0 & 0 & 0 & 0 & 0 & 0 & 0 & 1 & 1 & 2 & 0 & 1 & 2 & 1 & 2\\
0 & 0 & 0 & 0 & 0 & 0 & 0 & 0 & 0 & 0 & 0 & 1 & 0 & 0 & 0 & 1 & 4 & 0 & 12 & 0 & 0 & 1 & 6 & 0 & 3 & 14 & 4 & 10\\
0 & 0 & 0 & 0 & 0 & 0 & 0 & 0 & 0 & 0 & 0 & 0 & 0 & 0 & 0 & 0 & 2 & 0 & 12 & 0 & 0 & 0 & 2 & 0 & 1 & 12 & 2 & 8\\
0 & 0 & 0 & 0 & 0 & 0 & 0 & 0 & 0 & 0 & 0 & 0 & 0 & 0 & 0 & 0 & 0 & 0 & 3 & 0 & 0 & 0 & 0 & 0 & 0 & 3 & 0 & 2\\
0 & 0 & 0 & 0 & 0 & 0 & 0 & 0 & 0 & 0 & 0 & 0 & 0 & 0 & 0 & 0 & 0 & 0 & 0 & 0 & 0 & 0 & 1 & 0 & 0 & 1 & 0 & 0\\
0 & 0 & 0 & 0 & 0 & 0 & 0 & 0 & 0 & 0 & 0 & 0 & 0 & 0 & 0 & 0 & 1 & 0 & 6 & 0 & 0 & 0 & 0 & 0 & 0 & 5 & 1 & 2\\
    \end{pmatrix}
    \begin{pmatrix}
        \imxs{5v} \\ \\ \ims{e+3v} \\ \\ \ims{P3+2v} \\ \\ \ims{claw+v} \\ \ims{K14} \\ \ims{2e+v} \\ \\ \im{P4+v} \\ \\ \ims{P3+e} \\  \\ \imxs{K3+2v} \\ \imxs{chair} \\ \ims{K3+e+v} \\ \ims{cricket} \\ \ims{C4+v} \\ \imxs{flag} \\ \ims{K4-e+v} \\ \imxs{bull} \\ \ims{dart} \\ \ims{K23} \\ \imxs{K4-e+P3} \\ \im{P5} \\ \ims{K3+e_5} \\ \ims{K3+P3} \\ \ims{2K3} \\ \imxs{C5} \\ \imxs{house} \\ \ims{fan} \\ \im{kite} \\ \ims{C4+claw} 
    \end{pmatrix} = \mathbf{b}$$
    \end{figure}
    \begin{itemize}
        \item \tb{Rows 1-9:} We count induced copies of the order 4 subgraphs in the order 5 subgraphs.
        \item \tb{Rows 10-13:} We count connected subgraphs with each number of edges from 4 to 7.
        \item \tb{Rows 14-17:} We count ways to cover 5 vertices with a triangle plus 1, 2, 3, or 4 edges.
        \item \tb{Rows 18-19:}  We count ways to cover 5 vertices with 2 triangles, or 2 triangles plus an edge.
    \end{itemize}
    The reduced row echelon form of the matrix is:
    $$\arraycolsep=3pt
    \begin{pmatrix}
1 & 0 & 0 & 0 & 0 & 0 & 0 & 1 & 0 & 1 & 0 & 0 & 0 & 0 & 0 & 1 & 0 & 0 & 0 & 2 & 0 & 0 & 0 & 2 & 0 & 0 & -1 & 0\\
0 & 1 & 0 & 0 & 0 & 0 & 0 & -3 & 0 & -3 & 0 & 0 & 0 & 0 & 0 & -3 & 0 & 0 & 0 & -6 & 0 & 0 & 0 & -6 & 0 & 0 & 3 & 0\\
0 & 0 & 1 & 0 & 0 & 0 & -1 & 1 & 0 & 1 & 0 & 0 & 0 & 0 & 0 & 2 & 0 & 0 & 0 & 3 & 0 & 0 & 0 & 3 & 0 & 0 & -2 & 0\\
0 & 0 & 0 & 1 & 0 & 0 & 1 & 0 & 0 & -1 & 0 & 0 & 0 & 0 & 0 & -1 & 0 & 0 & 0 & -2 & 0 & 0 & 0 & -2 & 0 & 0 & 1 & 0\\
0 & 0 & 0 & 0 & 1 & 0 & 0 & 0 & 0 & 1 & 0 & 0 & 0 & 0 & 0 & 0 & 0 & 0 & 0 & 1 & 0 & 0 & 0 & 1 & 0 & 0 & 0 & 0\\
0 & 0 & 0 & 0 & 0 & 1 & 1 & 2 & 0 & 2 & 0 & 0 & 0 & 0 & 0 & 1 & 0 & 0 & 0 & 3 & 0 & 0 & 0 & 3 & 0 & 0 & -1 & 0\\
0 & 0 & 0 & 0 & 0 & 0 & 0 & 0 & 1 & 0 & 0 & 0 & 0 & 0 & 0 & 0 & 0 & 0 & 0 & 0 & 0 & 0 & 0 & 0 & 0 & 0 & 0 & 0\\
0 & 0 & 0 & 0 & 0 & 0 & 0 & 0 & 0 & 0 & 1 & 0 & 0 & 0 & 0 & 0 & 0 & 0 & 0 & 0 & 0 & -1 & 0 & 0 & 0 & 0 & -1 & 0\\
0 & 0 & 0 & 0 & 0 & 0 & 0 & 0 & 0 & 0 & 0 & 1 & 0 & 0 & 0 & 1 & 0 & 0 & 0 & 0 & 0 & 1 & 0 & 0 & 0 & 0 & 0 & 0\\
0 & 0 & 0 & 0 & 0 & 0 & 0 & 0 & 0 & 0 & 0 & 0 & 1 & 0 & 0 & 0 & 0 & 0 & 0 & 0 & 0 & 0 & 0 & -1 & 0 & 0 & 0 & 0\\
0 & 0 & 0 & 0 & 0 & 0 & 0 & 0 & 0 & 0 & 0 & 0 & 0 & 1 & 0 & 0 & 0 & 0 & 0 & 0 & 0 & 0 & 0 & 1 & 0 & 0 & 0 & 0\\
0 & 0 & 0 & 0 & 0 & 0 & 0 & 0 & 0 & 0 & 0 & 0 & 0 & 0 & 1 & 0 & 0 & 0 & 0 & 0 & 0 & 0 & 0 & 0 & 0 & 0 & 0 & 0\\
0 & 0 & 0 & 0 & 0 & 0 & 0 & 0 & 0 & 0 & 0 & 0 & 0 & 0 & 0 & 0 & 1 & 0 & 0 & 0 & 0 & 0 & 0 & 0 & 0 & -1 & 1 & 0\\
0 & 0 & 0 & 0 & 0 & 0 & 0 & 0 & 0 & 0 & 0 & 0 & 0 & 0 & 0 & 0 & 0 & 1 & 0 & 0 & 0 & 0 & 0 & 0 & 0 & 0 & 0 & 0\\
0 & 0 & 0 & 0 & 0 & 0 & 0 & 0 & 0 & 0 & 0 & 0 & 0 & 0 & 0 & 0 & 0 & 0 & 1 & 0 & 0 & 0 & 0 & 0 & 0 & 1 & 0 & 0\\
0 & 0 & 0 & 0 & 0 & 0 & 0 & 0 & 0 & 0 & 0 & 0 & 0 & 0 & 0 & 0 & 0 & 0 & 0 & 0 & 1 & 1 & 0 & 0 & 0 & 0 & 1 & 0\\
0 & 0 & 0 & 0 & 0 & 0 & 0 & 0 & 0 & 0 & 0 & 0 & 0 & 0 & 0 & 0 & 0 & 0 & 0 & 0 & 0 & 0 & 1 & 0 & 0 & 1 & 0 & 0\\
0 & 0 & 0 & 0 & 0 & 0 & 0 & 0 & 0 & 0 & 0 & 0 & 0 & 0 & 0 & 0 & 0 & 0 & 0 & 0 & 0 & 0 & 0 & 0 & 1 & 0 & 0 & 0\\
0 & 0 & 0 & 0 & 0 & 0 & 0 & 0 & 0 & 0 & 0 & 0 & 0 & 0 & 0 & 0 & 0 & 0 & 0 & 0 & 0 & 0 & 0 & 0 & 0 & 0 & 0 & 1\\
    \end{pmatrix}.$$
    There are 5 rows with a single 1 and the rest 0's, so combining those with our initial 6 graphs, we find that $\ol{X}_G$ determines the number of induced copies in $\ol{G}$ of the following graphs:
    $$\arraycolsep=7pt
    \begin{array}{ccccccccccc}
        \ims{K5} & \ims{K5-e} & \im{K4+P3} & \ims{W4} & \im{K4+e} & \im{K4+v} & \ims{K3+2v} & \ims{K23} & \im{K4-e+v} & \ims{house} & \im{C4+claw}
    \end{array}.$$
    Taking complements, $\ol{X}_G$ determines the counts in $G$ of the following 11 graphs:
    $$\arraycolsep=7pt
    \begin{array}{ccccccccccc}
        \ims{5v} & \im{e+3v} & \im{P3+2v} & \im{2e+v} & \ims{claw+v} & \im{K14} & \ims{K4-e+P3} & \im{K3+e_5} & \im{cricket} & \iml{P5} & \im{P3+e}
    \end{array}.$$
    
    For $T$ a tree, this gives the counts of all order 5 forests except \im{P4+v} and \imxs{chair}. We can then compute the number of copies of \imxs{chair} because we know the number of copies of each induced subgraph on 4 vertices from the proof of Theorem \ref{thm:subgraphs_4}, and each \imxs{chair} has an induced \ims{claw} while each \im{P4+v} does not. Then all remaining sets of 5 vertices would form an induced \im{P4+v}, so that number can be determined as well. \qed

    \section{Proof of Theorem \ref{thm:stars}}\label{sec:stars}

        An induced copy of a $j$-vertex star in $G$ is equivalent to an induced copy in $\ol{G}$ of the order $j$ graph consisting of a $K_{j-1}$ together with an isolated vertex, which we will denote as $K_{j-1}\sqcup v.$ For $1\le i\le j-1,$ let $K_{j-1} \sqcup i\cdot e$ denote the order $j$ graph consisting of a $K_{j-1}$ plus an extra vertex $v$ connected to exactly $i$ of the vertices in the clique. Thus, the complement $\ol{K_{j-1}\sqcup i\cdot e}$ is an $(j-i)$-vertex star together with $i$ isolated vertices, since when we take the complement, the vertex $v$ not in the $K_{j-1}$ becomes the center of the star and the vertices of the $K_{j-1}$ adjacent to $v$ become the isolated vertices.
        
        For a graph $H$, write $\#(H)$ for the number of induced copies of $H$ in $\ol{G}.$ By Lemma \ref{lem:subgraphs}, the number $$\#(\underbrace{j,i,0\dots,0,1}_{j-1})$$ of ways to cover $j$ vertices in $\ol{G}$ with $i$ edges and one $K_{j-1}$ can be computed from $\ol{X}_G$ for each $i.$ Thus, our strategy will be to set up a system of linear equations by expressing these known values in terms of the numbers $\#(K_{j-1}\sqcup v)$ and $\#(K_{j-1}\sqcup i\cdot e)$ that we are interested in computing.
        
        Every set of $j$ vertices in $H$ that can be covered by $i$ edges and a $K_{j-1}$ is contained $K_{j-1}\sqcup k\cdot e$ for some $k,$ since those are the only induced subgraphs on $j$ vertices that contain a $K_{j-1}.$ Given such an induced subgraph of $H$ isomorphic to $K_{j-1}\sqcup k\cdot e,$ if we want to cover it with a $K_{j-1}$ and $i$ edges, we must first choose the $K_{j-1}$. Thus, let $a_{k}$ denote the number of induced copies of $K_{j-1}$ in $K_{j-1}\sqcup k\cdot e.$ If $k = j-1,$ then all vertices of the $K_{j-1}$ are connected to the extra vertex $v,$ so $K_{j-1}\sqcup (j-1)\cdot e = K_j.$ There are $j$ induced copies of $K_{j-1}$ in $K_j$ is $j,$ so $a_{j-1}=j.$ If $k = j-2,$ we get $a_{j-2}=2$ induced copies of $K_{j-1}$ in $K_{j-1}\sqcup (j-2)\cdot e,$ since the vertex not in the $K_{j-1}$ can be either of the two nonadjacent vertices. For $k < j-2,$ there is just one copy of $K_{j-1}$ in $K_{j-1}\sqcup k\cdot e,$ since the vertex with only $j-2$ neighbors must be the one left out. Thus, $$a_k = \begin{cases}
            j, &\tn{if }k=j-1; \\
            2, &\tn{if }k=j-2; \\
            1, &\tn{if }k < j-2.
        \end{cases}$$
        Now that we have the $K_{j-1}$, let $v$ be the remaining vertex not in the $K_{j-1}$. We need to choose $i$ of the $\binom{j-1}2+k$ total edges in the $K_{j-1}\sqcup k\cdot e$ such that at least one of them has $v$ as an endpoint (in order two cover $v.$ The number of ways to do this is $$\binom{\binom{j-1}2+k}{i} - \binom{\binom{j-1}2}{k},$$ since the first term represents the number of ways to choose any $i$ edges of $K_{j-1}\sqcup k\cdot e$ and the second represents the number of ways to choose all $k$ edges from the $K_{j-1}$ (which we do not want since then $v$ is not covered). 
        
        Since $i$ can take any value from 1 to $j-1$, we have a system of $j-1$ equations $$\sum_{k=1}^{j-1} a_k\left(\binom{\binom{j-1}2+k}{i} - \binom{\binom{j-1}2}{i}\right)\cdot\#(K_{j-1}\sqcup k\cdot e) = \#(\underbrace{j,i,0\dots,0,1}_{j-1})$$ for $i = 1,2,\dots,j-1.$ This gives $j-1$ linearly independent equations in the $j-1$ variables $\#(K_{j-1}\sqcup e),\#(K_{j-1}\sqcup 2\cdot e),\dots,\#(K_{j-1}\sqcup(j-1)\cdot e),$ so we can solve for all the variables to compute the values $\#(K_{j-1}\sqcup j\cdot e)$ for $k = 1,\dots,j-1$ given $\ol{X}_G.$
        
        Then to compute $\#(K_{j-1}\sqcup v),$ we count the total number of ways to choose a $K_{j-1}$ in $\ol{G}$ plus an extra vertex. The total number of $K_{j-1}$'s in $\ol{G}$ is just $[\m_{(j-1)1^{n-j+1}}],$ and the remaining vertex can be any of the other $n-j+1$ vertices in $\ol{G}$, so the total number of ways is $(n-j+1)\cdot[\m_{(j-1)1^{n-j+1}}].$ The $j$ chosen vertices will then form either an induced $K_{j-1}\sqcup v$ or an induced $K_{j-1}\sqcup k\cdot e$ in $\ol{G}$ for some $i,$ since those are the only induced subgraphs of order $j$ that contain a $K_{j-1}.$ For each induced $K_{j-1}\sqcup k\cdot e$ in $\ol{G}$, there are $a_k$ ways to choose the $K_{j-1},$ and the the extra vertex would have to be the remaining vertex. For each induced $K_{j-1}+v,$ there is one choice for the clique and the vertex. Thus, we get $$\#(K_{j-1}\sqcup v) = (n-j+1)\cdot[\m_{(j-1)1^{n-j+1}}] - \sum_{k=1}^{j-1}a_k\cdot \#(K_{j-1}\sqcup k\cdot e).$$ Since all values on the right side can be computed from $\ol{X}_G$, so can $\#(K_{j-1}\sqcup v).$ 
        
        Since we were counting induced subgraphs of $\ol{G}$, taking complements tells us that for each $j$ and $k,$ we can compute of copies in $G$ of $\ol{K_{j-1}\sqcup k\cdot e},$ which is an a star on $h = j-k$ vertices together with $k$ isolated vertices, as well as the number of copies in $G$ of $K_{j-1}\sqcup v,$ which is a star on $h=j$ vertices. This completes the proof of Theorem \ref{thm:stars}. \qed

\section*{Acknowledgements}

The author thanks Oliver Pechenik for suggesting the problem and for helpful discussions and comments. She was partially supported by the Natural Sciences and Engineering Research Council of Canada (NSERC) grant RGPIN-2022-03093.

\printbibliography

\end{document}